	\def\MR#1{}
\title{Chern classes of spherical CR manifolds}
\author{Yuya Takeuchi}
\address{Division of Mathematics \\ Faculty of Pure and Applied Sciences \\ University of Tsukuba
	\\ 1-1-1 Tennodai, Tsukuba, Ibaraki 305-8571 Japan}
\email{ytakeuchi@math.tsukuba.ac.jp, yuya.takeuchi.math@gmail.com}
\subjclass[2010]{32V05; 57R17}
\keywords{spherical CR structure, Chern class}
\thanks{This work was supported by JSPS Research Fellowship for Young Scientists
and JSPS KAKENHI Grant Number JP19J00063 and JP21K13792.}
\begin{document}

\begin{abstract}
	We first construct closed spherical CR manifolds of dimension at least five
	having non-trivial first Chern class with real coefficients.
	We next prove a constraint on Chern classes with real coefficients
	of (not necessarily closed) spherical CR manifolds.
	Finally,
	we obtain a topological obstruction to the existence of spherical CR structures
	on co-oriented contact manifolds.
\end{abstract}

\maketitle

\section{Introduction}
\label{section:introduction}

Locally conformally flat manifolds,
or $(PO(n+1, 1), S^{n})$-manifolds,
play a crucial role in conformal geometry.
Chern and Simons~\cite{Chern-Simons1971} and Avez~\cite{Avez1970}
have proved that the Pontrjagin classes with real coefficients
are trivial on any locally conformally flat manifold.
In this paper,
we consider a similar problem for ``locally CR flat'' manifolds,
which are known as \emph{spherical CR manifolds},
or $(PU(n+1, 1), S^{2n+1})$-manifolds.

We will first construct closed spherical CR manifolds of dimension at least five
having non-trivial first Chern class with real coefficients.
In particular,
vanishing of characteristic classes does not occur for spherical CR manifolds in general
as opposed to locally conformally flat manifolds.

\begin{theorem}
\label{thm:spherical-CR-with-non-trivial-first-Chern-class}
	For each integer $n \geq 2$,
	there exists a closed spherical CR manifold $(S, T^{1,0}S)$ of dimension $2n+1$
	with $c_{1}(T^{1,0}S) \neq 0$ in $H^{2}(S, \bbR)$.
	In particular,
	$(S, T^{1, 0} S)$ has no pseudo-Einstein contact forms.
\end{theorem}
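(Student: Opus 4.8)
\emph{Strategy.} The plan is to realise $S$ as the total space of a regular Sasakian circle bundle $\pi\colon S\to X$ over a suitably chosen compact Kähler base $(X,\omega)$ of complex dimension $n$, and to compute $c_{1}(T^{1,0}S)$ in terms of the base. For such a Boothby--Wang fibration the differential $d\pi$ maps the contact distribution of $S$ complex-linearly and isomorphically onto $TX$ (the fibre direction is the Reeb direction, transverse to the contact distribution), so there is a natural isomorphism $T^{1,0}S\cong\pi^{*}T^{1,0}X$ of complex vector bundles; in particular $c_{1}(T^{1,0}S)=\pi^{*}c_{1}(X)$ in $H^{2}(S,\bbR)$. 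The key geometric input I would use is the correspondence between the two flatness notions: the induced CR structure on $S$ is spherical exactly when the transverse Kähler metric is Bochner-flat, because the Chern--Moser tensor of $S$ is the pullback of the Bochner tensor of $(X,\omega)$. Thus everything reduces to choosing the Bochner-Kähler base correctly.

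\emph{Producing a non-trivial class.} Let $e\in H^{2}(X,\bbR)$ denote the real Euler class of $\pi$, which for a Boothby--Wang fibration is a positive multiple of the Kähler class $[\omega]$. The Gysin sequence gives an exact piece $H^{0}(X,\bbR)\xrightarrow{\,\cup e\,}H^{2}(X,\bbR)\xrightarrow{\,\pi^{*}\,}H^{2}(S,\bbR)$, whence $\ker\pi^{*}=\bbR\cdot e=\bbR\cdot[\omega]$. Therefore $c_{1}(T^{1,0}S)=\pi^{*}c_{1}(X)\neq 0$ as soon as $c_{1}(X)$ fails to be a real multiple of $[\omega]$. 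This is precisely where the familiar models break down: complex space forms (the round sphere, the Heisenberg/nilmanifold model, and the tautological bundle over a complex-hyperbolic manifold) are all Kähler--Einstein, so $c_{1}(X)\in\bbR\cdot[\omega]$ and one only recovers the classical spherical examples with vanishing $c_{1}$. I therefore need a compact Bochner-Kähler base that is \emph{not} Kähler--Einstein, with $b_{2}(X)\geq 2$ and with $c_{1}(X)$ and $[\omega]$ linearly independent in $H^{2}(X,\bbR)$. Such metrics exist in every complex dimension $n\geq 2$ on admissible projective bundles (via the Hamiltonian-$2$-form/Calabi-ansatz construction, and within Bryant's classification of Bochner-Kähler metrics); after rescaling the metric so that $[\omega]$ becomes an integral class one obtains the desired closed $(2n+1)$-dimensional spherical CR manifold $S$.

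\emph{The ``in particular''.} For $n\geq 2$, a pseudo-Einstein contact form has the property that its Tanaka--Webster first Chern form, which represents $c_{1}(T^{1,0}S)$ by Chern--Weil theory, is exact; hence the existence of any pseudo-Einstein contact form forces $c_{1}(T^{1,0}S)=0$ in $H^{2}(S,\bbR)$ (this is Lee's obstruction). Consequently $c_{1}(T^{1,0}S)\neq 0$ immediately rules out pseudo-Einstein contact forms, which yields the final assertion once the example is in hand.

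\emph{Where the difficulty lies.} The hard part is not the homological bookkeeping but the choice of base. Every ``homogeneous'' candidate is modelled on a complex space form and gives $c_{1}=0$, while the most familiar compact Bochner-Kähler examples---weighted projective spaces---have $b_{2}=1$, which again forces $\pi^{*}c_{1}(X)=0$ (indeed their circle bundles are rational homology spheres). So I must exhibit a compact Bochner-Kähler manifold that is simultaneously non-Einstein, has $b_{2}\geq 2$, and has $c_{1}$ independent of the Bochner-flat Kähler class, and then verify rigorously that the associated Sasakian CR structure is genuinely spherical by matching the vanishing of its Chern--Moser tensor with Bochner-flatness of the transverse metric. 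Carrying out this existence-and-independence step, together with the transverse-geometry identification of the Chern--Moser and Bochner tensors, is the technical heart of the proof.
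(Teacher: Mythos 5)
Your overall strategy coincides with the paper's: realize $S$ as the circle bundle of a negative Hermitian line bundle over a compact Bochner-flat K\"ahler manifold $(Y,\omega)$, use the identification of the CR curvature of $S$ with the pullback of the K\"ahler curvature of $Y$ (so sphericality $\Leftrightarrow$ Bochner-flatness), and then run the Gysin sequence to see that $p^{*}c_{1}(T^{1,0}Y)\neq 0$ precisely when $c_{1}(Y)$ is not proportional to the Euler class. The ``in particular'' via Lee's obstruction is also exactly what the paper does.

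However, the step you yourself identify as ``the technical heart'' --- exhibiting a compact Bochner-flat K\"ahler manifold, non-Einstein, with $b_{2}\geq 2$ and $c_{1}$ independent of the polarizing class, in every dimension $n\geq 2$ --- is left as an assertion, and the source you point to is the wrong one. Admissible projective bundles carrying Hamiltonian $2$-forms (Apostolov--Calderbank--Gauduchon) produce \emph{weakly} Bochner-flat K\"ahler metrics (Bochner tensor co-closed), not Bochner-flat ones; vanishing of the Chern--Moser tensor requires genuine Bochner-flatness, so these would not yield spherical CR structures. Likewise Bryant's classification does not hand you compact non-Einstein examples with $b_{2}\geq 2$ beyond the local products of complex space forms. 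The paper's resolution is elementary and explicit: by Tachibana--Liu, the product of two K\"ahler metrics of constant holomorphic sectional curvatures $c$ and $-c$ is Bochner-flat, so one takes $Y=Y_{1}\times\cps^{n-1}$ with $Y_{1}$ a genus-two surface carrying a K\"ahler--Einstein metric of Einstein constant $-1$, polarized by $L=T^{1,0}Y_{1}\boxtimes\calO(-2)$ with the Fubini--Study factor scaled to have the opposite holomorphic sectional curvature. Then $c_{1}(L)=c_{1}(L_{1})+c_{1}(L_{2})$ while $c_{1}(T^{1,0}Y)=c_{1}(L_{1})-\tfrac{n}{2}c_{1}(L_{2})$, and K\"unneth gives the required non-proportionality. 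Without an explicit base of this kind (or a verified substitute), your argument does not yet constitute a proof.
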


Note that there exist no known examples of spherical CR three-manifolds
having non-trivial first Chern class with real coefficients.

However,
we will obtain a constraint on higher Chern classes of spherical CR manifolds
in terms of the first Chern class.

\begin{theorem}
\label{thm:Chern-classes-of-spherical-CR-structure}
	Let $(M, T^{1,0}M)$ be a spherical CR manifold of dimension $2n+1$.
	Then
	\begin{equation}
		c_{k}(T^{1,0}M)
		= \frac{1}{(n+2)^{k}} \binom{n+2}{k} c_{1}(T^{1,0}M)^{k}
	\end{equation}
	in $H^{2k}(M, \bbR)$.
	In particular,
	\begin{equation}
		c_{2}(T^{1,0}M)
		= \frac{n+1}{2(n+2)} c_{1}(T^{1,0}M)^{2}
	\end{equation}
	in $H^{4}(M, \bbR)$.
\end{theorem}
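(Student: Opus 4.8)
The plan is to exploit the flat $PU(n+1,1)$-structure underlying a spherical CR manifold and to reduce the computation to a single application of the splitting principle. A spherical CR structure on $M$ is a flat $(PU(n+1,1),S^{2n+1})$-structure, with developing map $\mathrm{dev}\colon\widetilde M\to S^{2n+1}$ and holonomy $\rho\colon\pi_1(M)\to PU(n+1,1)$. Realising $S^{2n+1}$ as the projectivised null cone $\{[\zeta]:h(\zeta,\zeta)=0\}\subset\mathbb{CP}^{n+1}$ of a Hermitian form $h$ of signature $(n+1,1)$, the holonomy produces a flat $\mathbb{CP}^{n+1}$-bundle $\mathbb{P}(E)\to M$ carrying a parallel (up to scale) form $h$, and the developing map descends to a section $s\colon M\to\mathbb{P}(E)$ landing in the null-cone subbundle $\mathcal S=\{h(\zeta,\zeta)=0\}$. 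I will compute $c(T^{1,0}M)$ by comparing $T^{1,0}M$ with the pullback under $s$ of the fibrewise holomorphic tangent bundle $T_{\mathrm{rel}}$.

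The first input is a splitting-principle computation on $\mathbb{P}(E)$. Writing $y=s^{*}c_{1}(\mathcal O_{\mathbb{P}(E)}(1))\in H^{2}(M,\bbR)$, flatness of $E$ forces $c_{i}(E)=0$ in real cohomology for $i\ge1$, so the Grothendieck relation gives $y^{n+2}=0$; feeding $c(\pi^{*}E)=1$ into the relative Euler sequence $0\to\mathcal O\to\pi^{*}E\otimes\mathcal O(1)\to T_{\mathrm{rel}}\to0$ and pulling back by $s$ yields
\[
  c(s^{*}T_{\mathrm{rel}})=(1+y)^{n+2}\quad\text{in }H^{*}(M,\bbR).
\]
The second input is the CR normal sequence. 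Since $\mathrm{dev}$ is a local CR isomorphism, it identifies $T^{1,0}M$ with $s^{*}T^{1,0}\mathcal S$, producing
\[
  0\longrightarrow T^{1,0}M\longrightarrow s^{*}T_{\mathrm{rel}}\longrightarrow s^{*}N\longrightarrow0,
\]
where $N=T_{\mathrm{rel}}|_{\mathcal S}/T^{1,0}\mathcal S$ is the holomorphic CR-normal line bundle of $\mathcal S$.

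The crux is to show $c_{1}(N)=0$ in $H^{2}(M,\bbR)$, and this is exactly where the parallelism of $h$ enters. The defining section $\rho=h(\zeta,\zeta)$ of $\mathcal S$ is naturally a section of $\mathcal O(-1)\otimes\overline{\mathcal O(-1)}$, because $h$ is sesquilinear and $\zeta$ is the tautological section of $\mathcal O(-1)$; by strict pseudoconvexity its $(1,0)$-differential $\partial\rho$ is nowhere zero on $\mathcal S$ and induces a canonical isomorphism $N\cong\mathcal O(-1)\otimes\overline{\mathcal O(-1)}$. Hence $c_{1}(N)=c_{1}(\mathcal O(-1))+c_{1}(\overline{\mathcal O(-1)})=0$, so $c_{1}(s^{*}N)=0$. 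The normal sequence then gives $c(T^{1,0}M)=(1+y)^{n+2}$; comparing degree-two parts yields $c_{1}(T^{1,0}M)=(n+2)y$, so $y=c_{1}(T^{1,0}M)/(n+2)$ and
\[
  c_{k}(T^{1,0}M)=\binom{n+2}{k}y^{k}=\frac{1}{(n+2)^{k}}\binom{n+2}{k}c_{1}(T^{1,0}M)^{k},
\]
which is the claim, the case $k=2$ giving the stated formula for $c_{2}$. I expect the main obstacle to be twofold: first, the identification of $N$ and the vanishing $c_{1}(N)=0$, which is precisely what replaces the naive expectation that a flat structure kills all Chern classes and instead produces the relation above; second, the technical point that the holonomy lies in $PU(n+1,1)=PSU(n+1,1)$, so the standard representation need not lift to a genuine bundle $E$. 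The latter is harmless in real coefficients, since the lifting obstruction is torsion, but it must be handled to make $y$ and the Grothendieck relation rigorous, for instance by working with $\mathcal O(n+2)$ or passing to a finite cover.
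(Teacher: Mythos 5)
Your argument is correct, and it takes a genuinely different route from the paper. The paper works locally: it forms the rank-$(n+2)$ bundle $\calT=\calE(1)\oplus T^{1,0}M\oplus\calE(0)$ (Marugame's $\tensor{\calE}{^{A}}(1,0)$, a density twist of the standard CR tractor bundle), writes down the curvature of the tractor connection, observes that sphericity kills the Chern tensor $\tensor{S}{_{\alpha}_{\ovxb}_{\gamma}_{\ovxs}}$ and hence, via \cref{eq:divergence-of-Chern-tensor}, the tensor $\tensor{V}{_{\alpha}_{\ovxb}_{\gamma}}$, so that the curvature becomes $\tfrac{1}{n+2}\tensor{\Omega}{_{\gamma}^{\gamma}}\cdot\id$ and the total Chern form is $\bigl(1+\tfrac{1}{n+2}\tfrac{\sqrt{-1}}{2\pi}\tensor{\Omega}{_{\gamma}^{\gamma}}\bigr)^{n+2}$. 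You instead globalize via the $(PU(n+1,1),S^{2n+1})$-structure: the flat projective bundle, the section given by the developing map, the relative Euler sequence giving $c(s^{*}T_{\mathrm{rel}})=(1+y)^{n+2}$, and the key observation that the CR-normal line bundle of the null quadric is $\mathcal O(1)\otimes\overline{\mathcal O(1)}$ (you wrote $\mathcal O(-1)\otimes\overline{\mathcal O(-1)}$; the dual is the correct target of $\partial\rho$, but either way $c_{1}=0$). The two proofs are secretly dual: your flat bundle $E$ \emph{is} the standard tractor bundle, and $s^{*}\mathcal O(1)$ corresponds to the density $\calE(1,0)$, so $(1+y)^{n+2}$ is the same expansion. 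What your route buys is a purely topological argument in the spirit of Chern--Simons and Avez, with no curvature computation; what it costs is two pieces of input you correctly flag but should pin down: the Liouville-type extension theorem needed to know that a spherical CR manifold really is a $(PU(n+1,1),S^{2n+1})$-manifold, and the lifting of the holonomy. On the latter, be careful with "passing to a finite cover" --- a torsion class in $H^{2}(\pi_{1}(M),\mu_{n+2})$ need not die on a finite-index subgroup; the clean fix is to define $y\coloneqq\tfrac{1}{n+2}\bigl(c_{1}(\det T_{\mathrm{rel}})-\pi^{*}c_{1}(\det E)\bigr)$ in real cohomology (where $\det E$ is flat), which exists for the projective bundle without any lift. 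The paper's local method has the added benefit of producing explicit Chern--Weil representatives and of isolating exactly which curvature components must vanish.
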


The following proposition implies that
it is essential to consider the cohomology with real coefficients.

\begin{proposition}
\label{prop:counterexample-to-integer-coefficients}
	For each integer $n \geq 2$,
	there exists a closed spherical CR manifold $(S, T^{1,0}S)$ of dimension $2n+1$
	such that
	\begin{equation}
		2(n+2) c_{2}(T^{1,0}S)
		\neq (n+1) c_{1}(T^{1,0}S)^{2}
	\end{equation}
	in $H^{4}(S, \bbZ)$.
\end{proposition}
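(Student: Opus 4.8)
The plan is to realize $S$ as a lens-type quotient of the standard CR sphere and to exploit the torsion in its cohomology. Fix an integer $m \geq 2$ and let $\bbZ/m$ act on $S^{2n+1} \subset \bbC^{n+1}$ diagonally by $z \mapsto e^{2\pi i/m} z$. This action is free and is induced by the scalar element $e^{2\pi i/m} I \in U(n+1) \subset PU(n+1,1)$, hence acts by CR automorphisms of the model sphere; the quotient $S = S^{2n+1}/(\bbZ/m)$ therefore inherits a spherical CR structure $T^{1,0}S$ and is a closed manifold of dimension $2n+1$. The residual circle action exhibits $S$ as the total space of a circle bundle $p \colon S \to \bbC P^{n}$ over the base of the Hopf fibration, whose associated complex line bundle is $\mathcal{O}(-m)$; in particular its Euler class is $e = -m h$, where $h \in H^2(\bbC P^n; \bbZ)$ denotes the hyperplane class (the overall sign being immaterial for what follows).

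First I would determine the cohomology of $S$. In the Gysin sequence of $p$ the cup product $\cup\, e$ is multiplication by $-m$ on powers of $h$, so for $1 \leq j \leq n$ one gets $H^{2j}(S; \bbZ) \cong \bbZ/m$, generated by $u^{j}$ with $u := p^{*} h$; since $n \geq 2$ this applies in particular to $H^{4}(S; \bbZ) \cong \bbZ/m$, generated by $u^{2}$. The key geometric step, which I expect to be the main obstacle, is the identification of the CR bundle: the contact distribution of $S$ is the horizontal distribution of the Hopf connection, and $dp$ carries it isomorphically, as a complex vector bundle, onto $T^{1,0}\bbC P^{n}$, so that $T^{1,0}S \cong p^{*} T^{1,0}\bbC P^{n}$. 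Verifying that the CR complex structure agrees with the horizontal lift of the complex structure on $\bbC P^{n}$ is the point requiring care.

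Granting this identification, the Chern classes of $T^{1,0}S$ are pullbacks of those of $\bbC P^{n}$. From $c(T^{1,0}\bbC P^{n}) = (1 + h)^{n+1}$ I read off $c_{1}(T^{1,0}S) = (n+1) u$ and $c_{2}(T^{1,0}S) = \binom{n+1}{2} u^{2}$. Substituting into the left-hand expression and simplifying yields
\begin{equation}
	2(n+2)\, c_{2}(T^{1,0}S) - (n+1)\, c_{1}(T^{1,0}S)^{2} = -(n+1)\, u^{2}
\end{equation}
in $H^{4}(S; \bbZ) \cong \bbZ/m$; consistently with \cref{thm:Chern-classes-of-spherical-CR-structure}, this is a torsion class and hence vanishes in real coefficients. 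Finally I would choose $m$ so that $-(n+1)$ is nonzero modulo $m$, i.e.\ $m \nmid (n+1)$. Taking $m = n+2$ gives $0 < n+1 < m$, so $-(n+1)\, u^{2} \neq 0$ in $\bbZ/m$, which establishes the claimed inequality and completes the proof.
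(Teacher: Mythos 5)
Your proposal is correct and is essentially the paper's own argument: the lens space $S^{2n+1}/(\bbZ/m)$ is precisely the circle bundle associated with $(\cps^{n},\calO(-m),h)$ used in the paper, the Gysin-sequence computation of $H^{4}(S,\bbZ)$ and the identification $T^{1,0}S\cong p^{\ast}T^{1,0}\cps^{n}$ are the same, and both arrive at the discrepancy $-(n+1)p^{\ast}\tau^{2}$. The only cosmetic difference is the choice of the order of the torsion group ($m=n+2$ versus a prime $d>n+1$), both of which work.
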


A spherical CR structure $T^{1,0}M$ has the underlying co-oriented contact structure $\xi \coloneqq \Re T^{1,0}M$.
Conversely,
it is natural to ask whether a given co-oriented contact structure $\xi$ admits a spherical CR structure.
\cref{thm:Chern-classes-of-spherical-CR-structure} gives a topological obstruction to this problem.

\begin{theorem}
\label{thm:topological-obstruction-to-spherical-CR-structure}
	Let $(M, \xi)$ be a $(2n+1)$-dimensional co-oriented contact manifold.
	If $\xi$ admits a spherical CR structure,
	then
	\begin{equation}
		c_{k}(\xi)
		= \frac{1}{(n+2)^{k}} \binom{n+2}{k} c_{1}(\xi)^{k}
	\end{equation}
	in $H^{2k}(M, \bbR)$.
	In particular,
	\begin{equation}
	\label{eq:second-Chern-contact-with-spherical}
		c_{2}(\xi)
		= \frac{n+1}{2(n+2)} c_{1}(\xi)^{2}
	\end{equation}
	in $H^{4}(M, \bbR)$.
\end{theorem}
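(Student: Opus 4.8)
The plan is to reduce Theorem \ref{thm:topological-obstruction-to-spherical-CR-structure} to Theorem \ref{thm:Chern-classes-of-spherical-CR-structure}, which I may assume. The essential point is that if a co-oriented contact structure $\xi$ on $M$ admits a spherical CR structure $T^{1,0}M$, then $\xi$ and $T^{1,0}M$ carry the same Chern classes. Indeed, by definition $\xi = \Re T^{1,0}M$, so the underlying real rank-$2n$ vector bundle of $T^{1,0}M$ is canonically isomorphic to $\xi$. The complex structure $J$ on $\xi$ coming from the CR structure is an almost complex structure compatible with the symplectic form $d\theta|_{\xi}$ (for any contact form $\theta$), and it is precisely this $J$ that turns $\xi$ into the complex vector bundle $T^{1,0}M$. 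Hence $(\xi, J) \cong T^{1,0}M$ as complex vector bundles, and therefore $c_{k}(\xi) = c_{k}(T^{1,0}M)$ for all $k$.

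\medskip

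Concretely, I would proceed as follows. First I would recall that for a co-oriented contact manifold $(M, \xi)$, the Chern classes $c_{k}(\xi)$ are defined by choosing any complex structure $J$ on the vector bundle $\xi$ compatible with the co-orientation (equivalently, tamed by $d\theta|_{\xi}$); the space of such $J$ is contractible, so the resulting complex vector bundle is well-defined up to homotopy and the classes $c_{k}(\xi)$ are independent of the choice. Next I would observe that a spherical CR structure $T^{1,0}M \subset \bbC \otimes TM$ with $\Re T^{1,0}M = \xi$ induces exactly such a compatible complex structure on $\xi$, and that the projection $\Re \colon T^{1,0}M \to \xi$ is a complex-linear bundle isomorphism onto $(\xi, J)$. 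This identifies the complex vector bundles $T^{1,0}M$ and $\xi$.

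\medskip

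With this identification in hand, the conclusion is immediate: applying Theorem \ref{thm:Chern-classes-of-spherical-CR-structure} to $(M, T^{1,0}M)$ gives
\begin{equation}
	c_{k}(T^{1,0}M)
	= \frac{1}{(n+2)^{k}} \binom{n+2}{k} c_{1}(T^{1,0}M)^{k}
\end{equation}
in $H^{2k}(M, \bbR)$, and substituting $c_{k}(\xi) = c_{k}(T^{1,0}M)$ yields the desired formula. The special case $k = 2$ follows by reading off $\binom{n+2}{2}/(n+2)^{2} = (n+1)/(2(n+2))$, giving \eqref{eq:second-Chern-contact-with-spherical}.

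\medskip

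I do not expect any serious obstacle here, since the heavy lifting is done by Theorem \ref{thm:Chern-classes-of-spherical-CR-structure}. The only point requiring genuine care is the identification $c_{k}(\xi) = c_{k}(T^{1,0}M)$, i.e.\ verifying that the complex structure supplied by the CR structure is indeed compatible with the co-orientation and hence computes the contact Chern classes; this is where one must be attentive to conventions for orienting $\xi$ and for the sign of $J$, but it is standard and not difficult.
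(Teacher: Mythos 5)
Your proposal is correct and follows exactly the route the paper takes: the paper also derives this theorem directly from \cref{thm:Chern-classes-of-spherical-CR-structure} together with the identification $c(\xi) = c(T^{1,0}M)$. The extra care you devote to checking that the CR-induced complex structure on $\xi$ is compatible with the co-orientation is a reasonable elaboration of the same one-line argument.
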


This theorem follows from \cref{thm:Chern-classes-of-spherical-CR-structure}
and the fact that $c(\xi) = c(T^{1,0}M)$.
We apply this for a Stein fillable contact manifold.

\begin{proposition}
\label{prop:Stein-fillable-with-no-spherical-CR}
	For each integer $n \geq 3$,
	there exists a Stein fillable contact manifold $(M, \xi)$ of dimension $2n+1$ such that
	\begin{equation}
		c_{2}(\xi)
		\neq \frac{n+1}{2(n+2)} c_{1}(\xi)^{2}
	\end{equation}
	in $H^{4}(M, \bbR)$.
	In particular,
	$\xi$ admits no spherical CR structures.
\end{proposition}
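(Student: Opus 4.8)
The plan is to realize $(M,\xi)$ as the unit cotangent bundle of a suitable closed manifold, filled by the associated disk cotangent bundle. Fix a closed oriented smooth manifold $N$ of dimension $n+1$, let $W = D^{*}N$ be the unit disk cotangent bundle, and let $M = S^{*}N = \partial W$ carry its canonical contact structure $\xi$. Since $D^{*}N$ is a Weinstein domain, $(M,\xi)$ is Stein fillable; this is the only place where the geometry enters, and it holds for every $N$. Everything else is a computation of $c(\xi) = c(T^{1,0}M)$ in $H^{*}(M,\bbR)$, which I will arrange so as to violate \eqref{eq:second-Chern-contact-with-spherical}; then \cref{thm:topological-obstruction-to-spherical-CR-structure} immediately yields the non-existence of a spherical CR structure.

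Next I would compute the Chern classes. The canonical almost complex structure on $T^{*}N$ identifies its tangent bundle with $p^{*}(TN\otimes_{\bbR}\bbC)$, where $p\colon D^{*}N\to N$; restricting to the boundary and using $T^{1,0}W|_{M}\cong T^{1,0}M\oplus\bbC$ gives $c(\xi) = \pi^{*}c(TN\otimes_{\bbR}\bbC)$ with $\pi\colon M\to N$. Because $TN\otimes_{\bbR}\bbC$ is the complexification of a real bundle, its odd Chern classes are $2$-torsion and $c_{2}(TN\otimes_{\bbR}\bbC) = -p_{1}(TN)$; hence in real coefficients $c_{1}(\xi)=0$ and $c_{2}(\xi) = -\pi^{*}p_{1}(TN)$. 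The constraint \eqref{eq:second-Chern-contact-with-spherical} thus reduces to the single requirement $\pi^{*}p_{1}(TN) = 0$, and it suffices to choose $N$ with $\pi^{*}p_{1}(TN)\neq 0$ in $H^{4}(M,\bbR)$.

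Finally I would choose $N$. The Gysin sequence of the sphere bundle $\pi\colon S^{*}N\to N$ shows that $\ker(\pi^{*}\colon H^{4}(N,\bbR)\to H^{4}(M,\bbR))$ is the image of cup product with the Euler class $e(T^{*}N)\in H^{n+1}(N,\bbR)$. For $n\geq 4$ this Euler class sits in degree $>4$, so $\pi^{*}$ is injective on $H^{4}$ and it is enough that $p_{1}(TN)\neq 0$. The delicate case is $n=3$, where $N$ is a $4$-manifold, $H^{4}(N,\bbR)\cong\bbR$ is one-dimensional, and $e(TN)=\chi(N)$, $p_{1}(TN)=3\sigma(N)$ are both multiples of the generator; here $\pi^{*}p_{1}(TN)\neq 0$ forces $\chi(N)=0$ and $\sigma(N)\neq 0$ simultaneously. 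I expect this to be the main obstacle: one must produce a closed oriented $4$-manifold with vanishing Euler characteristic and non-zero signature. Such a manifold exists: start from a surface bundle over a surface $X$ with $\sigma(X)\neq 0$ (a Kodaira fibration) and form $N_{0} = X\,\#\,k(S^{1}\times S^{3})$ with $k=\chi(X)/2$, which leaves $\sigma$ unchanged while killing $\chi$.

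To treat all $n\geq 3$ uniformly, set $N = N_{0}\times T^{n-3}$, an $(n+1)$-manifold with $p_{1}(TN)=p_{1}(TN_{0})\otimes 1\neq 0$ since the torus factor is parallelizable. For $n\geq 4$ the map $\pi^{*}$ is injective on $H^{4}$ by the degree count above, while for $n=3$ we have $N=N_{0}$ with $e(TN_{0})=\chi(N_{0})\,\omega_{N_{0}}=0$, so $\pi^{*}$ is again injective. In either case $c_{2}(\xi) = -\pi^{*}p_{1}(TN)\neq 0 = \tfrac{n+1}{2(n+2)}\,c_{1}(\xi)^{2}$, and \cref{thm:topological-obstruction-to-spherical-CR-structure} completes the proof. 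The remaining routine points are the identification $T(T^{*}N)\cong p^{*}(TN\otimes_{\bbR}\bbC)$ of complex bundles and the invariance of the Chern classes of $\xi$ under the Weinstein-to-Stein correspondence.
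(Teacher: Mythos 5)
Your argument is correct, but it is genuinely different from the paper's. The paper starts from the Etnyre--Ozbagci example of a Stein domain $\Omega_{0}$ in a complex surface whose boundary $M_{0}$ has $c_{1}(T^{1,0}M_{0}) \neq 0$, takes an $m$-fold product (with an extra $\bbC$ factor to fix parity), and pulls the Chern classes back to the sub-product $(M_{0})^{m}$ of the boundary: since $H^{4}(M_{0},\bbR)=0$ forces $\tau_{j}^{2}=0$, one gets $c_{2}=\tfrac{1}{2}c_{1}^{2}$ with $c_{1}^{2}\neq 0$, and $\tfrac12\neq\tfrac{n+1}{2(n+2)}$. Your examples are unit cotangent bundles, and they sit at the opposite extreme: $c_{1}(\xi)=0$ in $H^{2}(M,\bbR)$ but $c_{2}(\xi)=-\pi^{\ast}p_{1}(TN)\neq 0$, so you are really exhibiting the failure of the weaker necessary condition ``$c_{2}(\xi)=0$ whenever $c_{1}(\xi)=0$ in real coefficients.'' All the individual steps check out: the Gysin degree count for $n\geq 4$, the reduction for $n=3$ to a closed oriented four-manifold with $\chi=0$ and $\sigma\neq 0$, and the Kodaira-fibration-plus-$\#k(S^{1}\times S^{3})$ construction (which preserves $\sigma$ and hence $p_{1}=3\sigma$ while killing $\chi$, so $e(TN_{0})=0$ and $\pi^{\ast}$ is injective on $H^{4}$). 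The one place you should tighten the write-up is the point you yourself flag at the end: with the paper's definition of Stein fillability (boundary of a domain with a strictly plurisubharmonic defining function near its closure), the cleanest route is not the Weinstein-to-Stein correspondence but the Grauert tube of a real-analytic structure on $N$; this simultaneously provides the Stein domain diffeomorphic to $D^{\ast}N$ with boundary contactomorphic to $(S^{\ast}N,\xi_{\mathrm{can}})$ and, because $N$ is totally real of maximal dimension in its complexification $X$, gives the bundle isomorphism $T^{1,0}X|_{N}\cong TN\otimes_{\bbR}\bbC$ directly, so that $c(\xi)=\pi^{\ast}c(TN\otimes_{\bbR}\bbC)$ follows without any homotopy-of-almost-complex-structures argument. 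What your approach buys is a self-contained, filling-theoretic family of examples independent of the Etnyre--Ozbagci input; what the paper's approach buys is a shorter computation and examples where the \emph{ratio} between $c_{2}$ and $c_{1}^{2}$ (rather than the vanishing of $c_{1}$) is what violates \cref{eq:second-Chern-contact-with-spherical} of \cref{thm:topological-obstruction-to-spherical-CR-structure}.
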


Note that there exist no Stein fillable contact manifolds of dimension five violating \cref{eq:second-Chern-contact-with-spherical}.
This is because both the second Chern class and the square of the first Chern class
with real coefficients must vanish on any five-dimensional Stein fillable (more generally, holomorphically fillable) contact manifold;
see~\cite{Takeuchi2020} for example.

This paper is organized as follows.
In \cref{section:CR-geometry},
we recall basic facts on CR and Sasakian manifolds
and give a proof of \cref{thm:spherical-CR-with-non-trivial-first-Chern-class}.
\cref{section:proof-of-main-theorem} is devoted to a proof of \cref{thm:Chern-classes-of-spherical-CR-structure}.
In \cref{section:related-examples},
we construct some examples related to \cref{thm:Chern-classes-of-spherical-CR-structure,thm:topological-obstruction-to-spherical-CR-structure}.

\medskip

\noindent
\emph{Notation.}
We use Einstein's summation convention
and assume that lowercase Greek indices $\alpha, \beta, \gamma, \dots$ run from $1, \dots, n$.

\medskip

\section{CR geometry}
\label{section:CR-geometry}

\subsection{CR structures}
\label{subsection:CR-structures}

Let $M$ be an orientable smooth $(2n+1)$-dimensional manifold without boundary.
A \emph{CR structure} is a rank $n$ complex subbundle $T^{1,0}M$
of the complexified tangent bundle $TM \otimes \mathbb{C}$ such that
\begin{equation}
	T^{1,0}M \cap T^{0,1}M = 0, \qquad
	[\Gamma(T^{1,0}M), \Gamma(T^{1,0}M)] \subset \Gamma(T^{1,0}M),
\end{equation}
where $T^{0,1}M$ is the complex conjugate of $T^{1,0}M$ in $TM \otimes \bbC$.
A typical example of a CR manifold is a real hypersurface $M$ in an $(n+1)$-dimensional complex manifold $X$;
this $M$ has the canonical CR structure
\begin{equation}
	T^{1,0}M
	\coloneqq T^{1,0}X |_{M} \cap (TM \otimes \bbC).
\end{equation}
In particular,
the unit sphere $S^{2n+1}$ in $\bbC^{n+1}$ has the canonical CR structure $T^{1,0}S^{2n+1}$.
A CR manifold $(M, T^{1,0}M)$ is said to be \emph{spherical}
if it is locally isomorphic to $(S^{2n+1}, T^{1,0} S^{2n+1})$.
Let $K_{M}$ denote the subbundle of $\bigwedge\nolimits^{n + 1} (T^{\ast}M \otimes \bbC)$
defined by the equation $\ovZ \contr \zeta = 0$ for all $Z \in T^{1,0}M$,
called the \emph{canonical bundle} of $M$.
For a real number $w$,
we define the bundle $\calE(w)$ of \emph{$w$-densities} by
\begin{equation}
	\calE(w)
	\coloneqq (K_{M} \otimes \overline{K_{M}})^{-w/(n+2)}.
\end{equation}
(When $K_{M} \setminus \{0\}$ is viewed as a principal $\bbC^{\times}$-bundle,
$\calE(w)$ is the complex line bundle
induced by the representation $\lambda \mapsto \abs{\lambda}^{-2w/(n+2)}$.)

A CR structure $T^{1,0}M$ is said to be \emph{strictly pseudoconvex}
if there exists a nowhere-vanishing real one-form $\theta$ on $M$
such that
$\theta$ annihilates $T^{1,0}M$ and
\begin{equation}
	-\sqrt{- 1} d \theta (Z, \ovZ) > 0, \qquad
	0 \neq Z \in T^{1,0}M.
\end{equation}
We call such a one-form a \emph{contact form}.
The triple $(M, T^{1,0}M, \theta)$ is called a \emph{pseudo-Hermitian manifold}.
Let $T$ denote the \emph{Reeb vector field} with respect to $\theta$; 
that is,
the unique vector field satisfying
\begin{equation}
	\theta(T) = 1, \qquad T \contr d\theta = 0.
\end{equation}
Let $(\tensor{Z}{_{\alpha}})$ be a local frame of $T^{1,0}M$,
and set $\tensor{Z}{_{\ovxa}} = \overline{\tensor{Z}{_{\alpha}}}$.
Then
$(T, \tensor{Z}{_{\alpha}}, \tensor{Z}{_{\ovxa}})$ gives a local frame of $TM \otimes \mathbb{C}$,
called an \emph{admissible frame}.
Its dual frame $(\theta, \tensor{\theta}{^{\alpha}}, \tensor{\theta}{^{\ovxa}})$
is called an \emph{admissible coframe}.
The two-form $d \theta$ is written as
\begin{equation}
	d \theta
	= \sqrt{- 1} \tensor{l}{_{\alpha}_{\ovxb}} \tensor{\theta}{^{\alpha}} \wedge \tensor{\theta}{^{\ovxb}},
\end{equation}
where $(\tensor{l}{_{\alpha}_{\ovxb}})$ is a positive definite Hermitian matrix.
We use $\tensor{l}{_{\alpha}_{\ovxb}}$ and its inverse $\tensor{l}{^{\alpha}^{\ovxb}}$
to raise and lower indices of tensors.

Let $\zeta$ be a (locally defined) nowhere-vanishing section of $K_{M}$.
Let us denote by $\abs{\zeta}^{2w}$ the section $(\zeta \otimes \ovxz)^{w}$ of $\calE(-(n+2)w)$.
A contact form $\theta$ is \emph{volume normalized} by $\zeta$ if
\begin{equation}
	\theta \wedge (d \theta)^{n}
	= (\sqrt{- 1})^{n^{2}}(n !) \theta \wedge (T \contr \zeta) \wedge (T \contr \ovxz).
\end{equation}
Conversely,
for a contact form $\theta$,
there exists a (locally defined) nowhere-vanishing section $\zeta$ of $K_{M}$
such that $\theta$ is volume normalized by $\zeta$.
Such a $\zeta$ is determined uniquely by $\theta$ modulo multiplication by $S^{1}$-valued smooth functions.
Hence $\abs{\zeta}^{2w}$ is independent of the choice of $\zeta$
and defines a nowhere-vanishing global section of $\calE(-(n+2)w)$.
In particular,
$\calE(w)$ is a trivial complex line bundle.

An important example of a strictly pseudoconvex CR manifold
is the boundary of a strictly pseudoconvex domain.
Let $\Omega$ be a relatively compact domain in an $(n+1)$-dimensional complex manifold $X$
with smooth boundary $M = \partial \Omega$.
Then there exists a smooth function $\rho$ on $X$ such that
\begin{equation}
	\Omega = \rho^{-1}((-\infty, 0)), \quad
	M = \rho^{-1}(0), \quad
	d \rho \neq 0 \ \text{on} \ M;
\end{equation}
such a $\rho$ is called a \emph{defining function} of $\Omega$.
A domain $\Omega$ is said to be \emph{strictly pseudoconvex}
if we can take a defining function $\rho$ of $\Omega$ that is strictly plurisubharmonic near $M$.
Then $M$ is a closed strictly pseudoconvex real hypersurface,
and $d^{c} \rho |_{M}$ is a contact form on $M$,
where $d^{c} \coloneqq (\sqrt{-1}/2) (\delb - \del)$.
In particular,
$(S^{2n+1}, T^{1,0}S^{2n+1})$ is a strictly pseudoconvex CR manifold;
this is because $S^{2n+1}$ is the boundary of the unit ball in $\bbC^{n+1}$,
which is a strictly pseudoconvex domain.
This implies that any spherical CR manifold is strictly pseudoconvex.

We call $\Omega$ a \emph{Stein domain}
if $\Omega$ admits a defining function $\rho$
that is strictly plurisubharmonic on a neighborhood of the closure of $\Omega$.
Note that a Stein domain is a Stein manifold;
this is because $-1 / \rho$ defines a strictly plurisubharmonic exhaustion function on $\Omega$.
A co-oriented contact structure $\xi$ on a closed $(2n+1)$-dimensional manifold $M$
is \emph{Stein fillable} if  there exists a Stein domain $\Omega$ of dimension $n+1$
such that $(\bdry \Omega, \Re T^{1,0}\bdry \Omega)$ is contactmorphic to $(M, \xi)$.

\subsection{Tanaka-Webster connection}
\label{subsection:TW-connection}

A contact form $\theta$ induces a canonical connection $\nabla$ on $T M$,
called the \emph{Tanaka-Webster connection} with respect to $\theta$.
It is defined by
\begin{equation}
	\nabla T
	= 0,
	\quad
	\nabla \tensor{Z}{_{\alpha}}
	= \tensor{\omega}{_{\alpha}^{\beta}} \tensor{Z}{_{\beta}},
	\quad
	\nabla \tensor{Z}{_{\ovxa}}
	= \tensor{\omega}{_{\ovxa}^{\ovxb}} \tensor{Z}{_{\ovxb}}
	\quad
	\pqty{ \tensor{\omega}{_{\ovxa}^{\ovxb}}
	\coloneqq \overline{\tensor{\omega}{_{\alpha}^{\beta}}} }
\end{equation}
with the following structure equations:
\begin{gather}
\label{eq:str-eq-of-TW-conn1}
	d \tensor{\theta}{^{\beta}}
	= \tensor{\theta}{^{\alpha}} \wedge \tensor{\omega}{_{\alpha}^{\beta}}
	+ \tensor{A}{^{\beta}_{\ovxa}} \theta \wedge \tensor{\theta}{^{\ovxa}}, \\
\label{eq:str-eq-of-TW-conn2}
	d \tensor{l}{_{\alpha}_{\ovxb}}
	= \tensor{\omega}{_{\alpha}^{\gamma}} \tensor{l}{_{\gamma}_{\ovxb}}
	+ \tensor{l}{_{\alpha}_{\ovxg}} \tensor{\omega}{_{\ovxb}^{\ovxg}}.
\end{gather}
The tensor $\tensor{A}{_{\alpha}_{\beta}} \coloneqq \overline{\tensor{A}{_{\ovxa}_{\ovxb}}}$
is shown to be symmetric and called the \emph{Tanaka-Webster torsion}.

The curvature form
$\tensor{\Omega}{_{\alpha}^{\beta}} \coloneqq d \tensor{\omega}{_{\alpha}^{\beta}}
- \tensor{\omega}{_{\alpha}^{\gamma}} \wedge \tensor{\omega}{_{\gamma}^{\beta}}$
of the Tanaka-Webster connection satisfies
\begin{equation}
\label{eq:curvature-form-of-TW-connection}
	\begin{split}
		\tensor{\Omega}{_{\alpha}^{\beta}}
		&= \tensor{R}{_{\alpha}^{\beta}_{\gamma}_{\ovxs}} \tensor{\theta}{^{\gamma}} \wedge \tensor{\theta}{^{\ovxs}}
			- \tensor{\nabla}{^{\beta}} \tensor{A}{_{\alpha}_{\gamma}} \theta \wedge \tensor{\theta}{^{\gamma}}
			+ \tensor{\nabla}{_{\alpha}} \tensor{A}{^{\beta}_{\ovxg}} \theta \wedge \tensor{\theta}{^{\ovxg}}  \\
		&\quad - \sqrt{-1} \tensor{A}{_{\alpha}_{\gamma}} \tensor{\theta}{^{\gamma}} \wedge \tensor{\theta}{^{\beta}}
			+ \sqrt{-1} \tensor{l}{_{\alpha}_{\ovxg}} \tensor{A}{^{\beta}_{\ovxr}}
			\tensor{\theta}{^{\ovxg}} \wedge \tensor{\theta}{^{\ovxr}}.
	\end{split}
\end{equation}
We call the tensor $\tensor{R}{_{\alpha}^{\beta}_{\gamma}_{\ovxs}}$
the \emph{Tanaka-Webster curvature}.
This tensor has the symmetry 
\begin{equation}
	\tensor{R}{_{\alpha}_{\ovxb}_{\gamma}_{\ovxs}}
	= \tensor{R}{_{\gamma}_{\ovxb}_{\alpha}_{\ovxs}}
	= \tensor{R}{_{\alpha}_{\ovxs}_{\gamma}_{\ovxb}}.
\end{equation}
Contractions of indices give the \emph{Tanaka-Webster Ricci curvature}
$\tensor{\Ric}{_{\gamma}_{\ovxs}} \coloneqq \tensor{R}{_{\alpha}^{\alpha}_{\gamma}_{\ovxs}}$
and the \emph{Tanaka-Webster scalar curvature}
$\Scal \coloneqq \tensor{\Ric}{_{\gamma}^{\gamma}}$.
The \emph{Chern tensor} $\tensor{S}{_{\alpha}_{\ovxb}_{\gamma}_{\ovxs}}$ is defined by
\begin{equation}
	\tensor{S}{_{\alpha}_{\ovxb}_{\gamma}_{\ovxs}}
	\coloneqq \tensor{R}{_{\alpha}_{\ovxb}_{\gamma}_{\ovxs}}
		- \tensor{P}{_{\alpha}_{\ovxb}} \tensor{l}{_{\gamma}_{\ovxs}}
		- \tensor{P}{_{\gamma}_{\ovxb}} \tensor{l}{_{\alpha}_{\ovxs}}
		- \tensor{P}{_{\gamma}_{\ovxs}} \tensor{l}{_{\alpha}_{\ovxb}}
		- \tensor{P}{_{\alpha}_{\ovxs}} \tensor{l}{_{\gamma}_{\ovxb}},
\end{equation}
where
\begin{equation}
	\tensor{P}{_{\alpha}_{\ovxb}}
	\coloneqq \frac{1}{n+2} \pqty{\tensor{\Ric}{_{\alpha}_{\ovxb}} - \frac{\Scal}{2(n+1)} \tensor{l}{_{\alpha}_{\ovxb}}}.
\end{equation}
Note that the trace $P \coloneqq \tensor{P}{_{\alpha}^{\alpha}}$ is equal to $\Scal / 2(n+1)$.
It is known that $(M, T^{1,0}M)$ is a spherical CR manifold
if and only if the Chern tensor vanishes identically when $n \geq 2$~\cite{Chern-Moser1974}.
A tensor $\tensor{V}{_{\alpha}_{\ovxb}_{\gamma}}$ is defined by
\begin{equation}
	\tensor{V}{_{\alpha}_{\ovxb}_{\gamma}}
	\coloneqq \tensor{\nabla}{_{\ovxb}} \tensor{A}{_{\alpha}_{\gamma}}
		+ \sqrt{-1} \tensor{\nabla}{_{\gamma}} \tensor{P}{_{\alpha}_{\ovxb}}
		- \sqrt{-1} \tensor{T}{_{\gamma}} \tensor{l}{_{\alpha}_{\ovxb}}
		- 2 \sqrt{-1} \tensor{T}{_{\alpha}} \tensor{l}{_{\gamma}_{\ovxb}},
\end{equation}
where
\begin{equation}
	\tensor{T}{_{\alpha}}
	\coloneqq\frac{1}{n+2} (\tensor{\nabla}{_{\alpha}} P - \sqrt{-1} \tensor{\nabla}{^{\beta}} \tensor{A}{_{\alpha}_{\beta}}).
\end{equation}
This tensor is a divergence of the Chern tensor:
\begin{equation}
\label{eq:divergence-of-Chern-tensor}
	\tensor{\nabla}{^{\ovxs}} \tensor{S}{_{\alpha}_{\ovxb}_{\gamma}_{\ovxs}}
	= - n \sqrt{-1} \tensor{V}{_{\alpha}_{\ovxb}_{\gamma}};
\end{equation}
see~\cite{Case-Gover2020}*{(2.5)}.

Assume that $n \geq 2$.
A contact form $\theta$ is said to be \emph{pseudo-Einstein} if
\begin{equation}
\label{eq:pseudo-Einstein-condition}
	\tensor{\Ric}{_{\alpha}_{\overline{\beta}}}
	= \frac{1}{n} \Scal \cdot \tensor{l}{_{\alpha}_{\overline{\beta}}}.
\end{equation}
It is known that $c_{1}(T^{1,0}M) = 0$ in $H^{2}(M, \bbR)$
if $(M, T^{1,0}M)$ has a pseudo-Einstein contact form~\cite{Lee1988}*{Proposition D}.

\subsection{Sasakian manifolds}
\label{subsection:Sasakian-manifolds}

Sasakian manifolds play an important role in pseudo-Hermitian geometry.
See~\cite{Boyer-Galicki2008} for a comprehensive introduction to Sasakian manifolds.

A \emph{Sasakian manifold} is a pseudo-Hermitian manifold $(S, T^{1,0}S, \eta)$
with vanishing Tanaka-Webster torsion.
This condition is equivalent to that the Reeb vector field $T$ with respect to $\eta$
preserves the CR structure $T^{1,0}S$.

A typical example of a Sasakian manifold
is the circle bundle associated with a negative Hermitian line bundle.
The construction is as follows.
Let $Y$ be an $n$-dimensional complex manifold
and $(L, h)$ be a Hermitian holomorphic line bundle over $Y$
such that
\begin{equation}
	\omega
	\coloneqq - \sqrt{-1} \Theta_{h} = d d^{c} \log h
\end{equation}
is a \Kahler form on $Y$.
Consider the circle bundle
\begin{equation}
	S
	\coloneqq \Set{ v \in L | h(v, v) = 1 }
\end{equation}
over $Y$,
which is a real hypersurface in the total space of $L$.
The one-form $\eta \coloneqq d^{c} \log h |_{S}$
is a connection one-form of the principal $S^{1}$-bundle $p \colon S \to Y$
and satisfies $d \eta = p^{\ast} \omega$.
Moreover,
the natural CR structure $T^{1,0}S$ coincides with the horizontal lift of $T^{1,0}Y$ with respect to $\eta$.
Since $\omega$ is a \Kahler form,
we have
\begin{equation}
	- \sqrt{-1} d \eta (Z, \ovZ)
	= - \sqrt{-1} \omega (p_{\ast} Z, p_{\ast} \ovZ)
	> 0
\end{equation}
for all non-zero $Z \in T^{1,0}S$.
This implies that $(S, T^{1,0}S)$ is a strictly pseudoconvex CR manifold,
and $\eta$ is a contact form on $S$.
We call this pseudo-Hermitian manifold $(S, T^{1,0}S, \eta)$
the \emph{circle bundle associated with $(Y, L, h)$}.

Consider the Tanaka-Webster connection with respect to $\eta$.
Take a local coordinate $(z^{1}, \dots , z^{n})$ of $Y$.
The \Kahler form $\omega$ is written as
\begin{equation}
	\omega
	= \sqrt{-1} \tensor{g}{_{\alpha}_{\ovxb}} d z^{\alpha} \wedge d \ovz^{\beta},
\end{equation}
where $(\tensor{g}{_{\alpha}_{\ovxb}})$ is a positive definite Hermitian matrix.
Let $Z_{\alpha}$ be the horizontal lift of $\pdvf{}{z^{\alpha}}$.
Then $(T, Z_{\alpha}, Z_{\ovxa} \coloneqq \overline{Z_{\alpha}})$
is an admissible frame on $S$.
The corresponding admissible coframe is given by
$(\eta, \theta^{\alpha} \coloneqq p^{\ast} (d z^{\alpha}), \theta^{\ovxa} \coloneqq p^{\ast} (d \ovz^{\alpha}))$.
Since $d \eta = p^{*} \omega$,
we have
\begin{equation}
	d \eta = \sqrt{-1} (p^{\ast} \tensor{g}{_{\alpha}_{\ovxb}}) \theta^{\alpha} \wedge \theta^{\ovxb},
\end{equation}
which implies $\tensor{l}{_{\alpha}_{\ovxb}} = p^{\ast} \tensor{g}{_{\alpha}_{\ovxb}}$.
The connection form $\tensor{\pi}{_{\alpha}^{\beta}}$ of the \Kahler metric
with respect to the frame $(\pdvf{}{\tensor{z}{^{\alpha}}})$ satisfies
\begin{equation} \label{eq:structure-equation-for-Kahler-metric}
	0 = d (d z^{\beta}) = d \tensor{z}{^{\alpha}} \wedge \tensor{\pi}{_{\alpha}^{\beta}},
	\qquad
	d \tensor{g}{_{\alpha}_{\ovxb}}
	= \tensor{\pi}{_{\alpha}^{\gamma}} \tensor{g}{_{\gamma}_{\ovxb}}
	+ \tensor{g}{_{\alpha}_{\ovxg}} \tensor{\pi}{_{\ovxb}^{\ovxg}}
	\qquad
	\pqty{\tensor{\pi}{_{\ovxa}^{\ovxb}}
	\coloneqq \overline{\tensor{\pi}{_{\alpha}^{\beta}}}}.
\end{equation}
We write as $\tensor{\Pi}{_{\alpha}^{\beta}}$ the curvature form of the \Kahler metric.
Pulling back \cref{eq:structure-equation-for-Kahler-metric} by $p$ gives
\begin{equation}
	d \theta^{\beta} = \tensor{\theta}{^{\alpha}} \wedge (p^{*} \tensor{\pi}{_{\alpha}^{\beta}}),
	\qquad
	d \tensor{l}{_{\alpha}_{\ovxb}}
	= (p^{\ast} \tensor{\pi}{_{\alpha}^{\gamma}}) \tensor{l}{_{\gamma}_{\ovxb}}
	+ \tensor{l}{_{\alpha}_{\ovxg}} (p^{\ast} \tensor{\pi}{_{\ovxb}^{\ovxg}}).
\end{equation}
This yields that
$\tensor{\omega}{_{\alpha}^{\beta}} = p^{\ast} \tensor{\pi}{_{\alpha}^{\beta}}$,
and the Tanaka-Webster torsion vanishes identically;
that is,
$(S, T^{1,0}S, \eta)$ is a Sasakian manifold.
Moreover,
the curvature form $\tensor{\Omega}{_{\alpha}^{\beta}}$ of the Tanaka-Webster connection
is given by $\tensor{\Omega}{_{\alpha}^{\beta}} = p^{\ast} \tensor{\Pi}{_{\alpha}^{\beta}}$.
In particular,
$(S, T^{1, 0} S)$ is a spherical CR manifold
if and only if $\omega$ defines a Bochner-flat \Kahler metric on $Y$ when $n \geq 2$.

Now we construct an example of a closed spherical CR manifold of dimension at least five
having non-trivial first Chern class with real coefficients.

\begin{proof}[Proof of \cref{thm:spherical-CR-with-non-trivial-first-Chern-class}]
	Let $Y_{1}$ be a closed Riemann surface of genus two.
	Take a Hermitian metric $h_{1}$ on $L_{1} \coloneqq T^{1, 0} Y_{1}$
	such that $\omega_{1} \coloneqq - \sqrt{-1} \Theta_{h_{1}}$ defines a \Kahler-Einstein metric on $Y_{1}$
	with Einstein constant $-1$.
	Let $h_{2}$ denote a Hermitian metric on $L_{2} \coloneqq \calO(-2)$ over $Y_{2} \coloneqq \cps^{n-1}$
	such that $\omega_{2} \coloneqq - \sqrt{-1} \Theta_{h_{2}}$
	is a positive constant multiple of the Fubini-Study form.
	Consider the Hermitian holomorphic line bundle
	$(L \coloneqq L_{1} \boxtimes L_{2}, h \coloneqq h_{1} \boxtimes h_{2})$ over $Y \coloneqq Y_{1} \times Y_{2}$.
	Then $\omega \coloneqq - \sqrt{-1} \Theta_{h} = \omega_{1} + \omega_{2}$ defines
	a Bochner-flat \Kahler metric on $Y$~\cite{Tachibana-Liu1970}*{Section 2}.
	Let $(S, T^{1,0}S, \eta)$ be the circle bundle associated with $(Y, L, h)$.
	Since $\omega$ is Bochner-flat,
	$(S, T^{1,0}S)$ is a spherical CR manifold.
	It remains to show $c_{1}(T^{1,0}S) \neq 0$ in $H^{2}(S, \bbR)$.
	The first Chern classes of $L$ and $T^{1,0}Y$ are given by
	\begin{gather}
		c_{1}(L)
		= c_{1}(L_{1}) + c_{1}(L_{2}), \\
		c_{1}(T^{1,0}Y)
		= c_{1}(T^{1,0}Y_{1}) + c_{1}(T^{1,0}Y_{2})
		= c_{1}(L_{1}) - \frac{n}{2} c_{1}(L_{2}).
	\end{gather}
	Consider the Gysin exact sequence
	\begin{equation}
		H^{0}(Y, \bbR) \cong \bbR \xrightarrow{c_{1}(L)}
		H^{2}(Y, \bbR) \xrightarrow{p^{\ast}}
		H^{2}(S, \bbR).
	\end{equation}
	Since $c_{1}(T^{1,0}Y)$ is not proportional to $c_{1}(L)$,
	the cohomology class $p^{\ast} c_{1}(T^{1,0}Y)$ is not equal to zero in $H^{2}(S, \bbR)$.
	From $p^{\ast} T^{1,0}Y \cong T^{1,0}S$,
	it follows that $c_{1}(T^{1,0}S) \neq 0$ in $H^{2}(S, \bbR)$.
	In particular,
	$(S, T^{1,0}S)$ has no pseudo-Einstein contact forms.
\end{proof}

\section{Proof of \cref{thm:Chern-classes-of-spherical-CR-structure}}
\label{section:proof-of-main-theorem}

Let $(M, T^{1,0}M)$ be a strictly pseudoconvex CR manifold of dimension $2 n + 1$.
Denote by $\calT$ the complex vector bundle $\calE(1) \oplus T^{1,0}M \oplus \calE(0)$ of rank $n+2$.
Since both $\calE(1)$ and $\calE(0)$ are trivial line bundles,
we have $c(T^{1,0}M) = c(\calT)$.
Hence it suffices to study $c(\calT)$ for a proof of \cref{thm:Chern-classes-of-spherical-CR-structure}.
In the proof of~\cite{Marugame2021}*{Proposition 5.4},
Marugame has introduced a connection $\nabla^{\calT}$ on $\calT$ via the CR tractor connection~\cite{Gover-Graham2005};
this $\calT$ coincides with $\tensor{\calE}{^{A}}(1,0)$ in~\cite{Marugame2021}.
The curvature form $\Omega^{\calT}$ of $\nabla^{\calT}$ satisfies
\begin{equation}
\label{eq:tractor-curvature}
	\Omega^{\calT}
	=
	\begin{pmatrix}
		0 & 0 & 0 \\
		\ast & \tensor{\Xi}{_{\alpha}^{\beta}} & 0 \\
		\ast & \ast & 0
	\end{pmatrix}
	+ \frac{1}{n + 2} \tensor{\Omega}{_{\gamma}^{\gamma}}
	\begin{pmatrix}
		1 & 0 & 0 \\
		0 & \tensor{\delta}{_{\alpha}^{\beta}} & 0 \\
		0 & 0 & 1
	\end{pmatrix}
	,
\end{equation}
where
\begin{equation}
\label{eq:trace-free-curvature}
	\tensor{\Xi}{_{\alpha}^{\beta}}
	\coloneqq \tensor{S}{_{\alpha}^{\beta}_{\gamma}_{\ovxs}} \tensor{\theta}{^{\gamma}} \wedge \tensor{\theta}{^{\ovxs}}
		- \tensor{V}{_{\alpha}^{\beta}_{\gamma}} \theta \wedge \tensor{\theta}{^{\gamma}}
		+ \tensor{V}{^{\beta}_{\alpha}_{\ovxg}} \theta \wedge \tensor{\theta}{^{\ovxg}}.
\end{equation}

\begin{proof}[Proof of \cref{thm:Chern-classes-of-spherical-CR-structure}]
	Since $(M, T^{1,0}M)$ is spherical,
	the Chern tensor $\tensor{S}{_{\alpha}^{\beta}_{\gamma}_{\ovxs}}$ vanishes identically.
	It follows from \cref{eq:divergence-of-Chern-tensor}
	that so does $\tensor{V}{_{\alpha}^{\beta}_{\gamma}}$ also.
	Thus we have $\tensor{\Xi}{_{\alpha}^{\beta}} = 0$.
	Combining this with \cref{eq:tractor-curvature} yields that
	\begin{equation}
		\det(\id_{\calT} + \frac{\sqrt{- 1}}{2 \pi} \Omega^{\calT})
		= \pqty{ 1 + \frac{1}{n+2} \frac{\sqrt{-1}}{2 \pi} \tensor{\Omega}{_{\gamma}^{\gamma}} }^{n+2}
	\end{equation}
	is a representative of $c(\calT) = c(T^{1,0}M)$.
	Since $(\sqrt{-1} / 2 \pi) \tensor{\Omega}{_{\gamma}^{\gamma}}$ is a representative of $c_{1}(T^{1,0}M)$,
	we have the desired conclusion.
\end{proof}

\section{Related examples}
\label{section:related-examples}

We first construct a spherical CR manifold having non-trivial second Chern class with real coefficients.

\begin{proposition}
\label{prop:spherical-CR-with-non-trivial-second-Chern-class}
	For each integer $n \geq 4$,
	there exists a closed spherical CR manifold $(S, T^{1,0}S)$ of dimension $2n+1$ with
	\begin{equation}
		c_{2}(T^{1,0}S)
		= \frac{n+1}{2(n+2)} c_{1}(T^{1,0}S)^{2}
		\neq 0
	\end{equation}
	in $H^{4}(S, \bbR)$.
\end{proposition}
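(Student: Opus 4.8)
The plan is to mimic the construction used in the proof of Theorem~\ref{thm:spherical-CR-with-non-trivial-first-Chern-class}, namely to take the circle bundle associated with a triple $(Y, L, h)$ where $\omega = -\sqrt{-1}\Theta_h$ is a Bochner-flat Kähler metric on a product. Since Theorem~\ref{thm:Chern-classes-of-spherical-CR-structure} guarantees the equality $c_2 = \tfrac{n+1}{2(n+2)} c_1^2$ automatically for any spherical CR manifold, the only thing requiring genuine work is to arrange that $c_1(T^{1,0}S)^2 \neq 0$ in $H^4(S,\bbR)$. The dimension bound $n \geq 4$ (as opposed to $n \geq 2$) strongly suggests that we need a base $Y$ of complex dimension at least $4$ so that $H^4(Y,\bbR)$ is large enough to support a nonzero square of a degree-two class that survives pushing into $H^4(S,\bbR)$.

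First I would choose a product base $Y = Y_1 \times Y_2$ analogous to before. A natural choice is $Y_1$ a product of closed Riemann surfaces of genus two (so that $T^{1,0}Y_1$ is Kähler-Einstein with negative constant, with each factor carrying $c_1$ that squares nontrivially on the product surface) and $Y_2 = \cps^{n-\dim Y_1}$ with $\calO(-d)$ for a suitable $d$, giving a Fubini-Study piece. The key point is to exploit the product structure: Bochner-flatness is preserved under taking products of constant-holomorphic-sectional-curvature Kähler-Einstein pieces by~\cite{Tachibana-Liu1970}*{Section 2}, exactly as invoked earlier. Then I would set $L = L_1 \boxtimes L_2$ with $h = h_1 \boxtimes h_2$, so $\omega = \omega_1 + \omega_2$ is Bochner-flat, and $(S,T^{1,0}S)$ is spherical.

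Next I would compute the cohomology. Using $p^{\ast} T^{1,0}Y \cong T^{1,0}S$, we have $c_1(T^{1,0}S) = p^{\ast} c_1(T^{1,0}Y)$ and hence $c_1(T^{1,0}S)^2 = p^{\ast}\bigl(c_1(T^{1,0}Y)^2\bigr)$. I would then analyze the Gysin sequence
\begin{equation}
	H^{2}(Y,\bbR) \xrightarrow{\,\cup\, c_1(L)\,} H^{4}(Y,\bbR) \xrightarrow{p^{\ast}} H^{4}(S,\bbR),
\end{equation}
whose exactness shows that $p^{\ast}\bigl(c_1(T^{1,0}Y)^2\bigr)$ is nonzero precisely when $c_1(T^{1,0}Y)^2$ does not lie in the ideal generated by $c_1(L)$. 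The hard part will be verifying this non-membership: one must compute $c_1(T^{1,0}Y)^2$ and $c_1(L)\cup H^2(Y,\bbR)$ explicitly in terms of the Künneth decomposition of $H^\ast(Y,\bbR)$ and confirm that no degree-two class $\beta$ satisfies $c_1(L)\cup\beta = c_1(T^{1,0}Y)^2$.

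The choices of $Y_1, Y_2$, and the twist $d$ must be calibrated so that this obstruction is nonzero; concretely, I expect the genus-two surface factors to contribute a nonvanishing product class in $H^4(Y,\bbR)$ that is not divisible by $c_1(L)$, while the requirement $n \geq 4$ reflects the minimal dimension needed for the surviving degree-four class to appear. Once $c_1(T^{1,0}S)^2 \neq 0$ is established, Theorem~\ref{thm:Chern-classes-of-spherical-CR-structure} immediately yields $c_2(T^{1,0}S) = \tfrac{n+1}{2(n+2)}c_1(T^{1,0}S)^2 \neq 0$, completing the proof.
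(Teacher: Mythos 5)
Your overall strategy is the paper's: take the circle bundle of a Hermitian line bundle over a product base with a Bochner-flat product K\"ahler metric, reduce everything to \cref{thm:Chern-classes-of-spherical-CR-structure}, and use the Gysin sequence to show $c_{1}(T^{1,0}Y)^{2}$ does not lie in the ideal generated by $c_{1}(L)$. However, your concrete choice of base breaks the construction at the very first step. The Tachibana--Liu criterion invoked in the paper makes a product K\"ahler metric Bochner-flat only when it is a product of \emph{exactly two} factors of constant holomorphic sectional curvature $c$ and $-c$. A product of two genus-two Riemann surfaces is K\"ahler--Einstein but is \emph{not} of constant (negative) holomorphic sectional curvature --- it is covered by the bidisk, not the ball --- so your $Y_{1}\times Y_{2}$ is a three-factor product that is not Bochner-flat, and the resulting $(S,T^{1,0}S)$ is not spherical. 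The paper instead takes $Y_{1}$ to be a fake projective plane (a closed complex hyperbolic surface with the Betti numbers of $\cps^{2}$), $L_{1}=K_{Y_{1}}^{-1}$, and $Y_{2}=\cps^{n-2}$ with $L_{2}=\calO(-3)$, the twists being calibrated so that the two holomorphic sectional curvatures are opposite constants.

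There is a second, related gap: you defer the non-membership computation, but it is exactly here that the choice of $Y_{1}$ matters and where a curve factor (as in \cref{thm:spherical-CR-with-non-trivial-first-Chern-class}) would fail. If $\dim_{\bbC}Y_{1}=1$ then $c_{1}(L_{1})^{2}=0$, so $H^{4}(Y,\bbR)$ is spanned by $c_{1}(L_{1})c_{1}(L_{2})$ and $c_{1}(L_{2})^{2}$, which is precisely the image of cup product with $c_{1}(L)$; hence $p^{\ast}c_{1}(T^{1,0}Y)^{2}$ could well vanish. One genuinely needs an irreducible complex hyperbolic factor of complex dimension two, so that $H^{4}(Y,\bbR)=\bbR c_{1}(L_{1})^{2}\oplus\bbR c_{1}(L_{1})c_{1}(L_{2})\oplus\bbR c_{1}(L_{2})^{2}$ is three-dimensional while $\Im(c_{1}(L)\cup\cdot)$ is only two-dimensional; the paper then checks that $c_{1}(T^{1,0}Y)^{2}$ has a nonzero component along $c_{1}(L_{2})^{2}$, which uses $n-2\geq 2$ (whence the hypothesis $n\geq 4$). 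Your heuristic for the bound $n\geq 4$ (``a base of complex dimension at least $4$'') is therefore not the actual reason.
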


\begin{proof}
	Let $Y_{1}$ be a fake projective plane;
	that is,
	a closed complex hyperbolic manifold with the same Betti numbers as $\cps^{2}$;
	see~\cite{Mumford1979} for example.
	Take a Hermitian metric $h_{1}$ on $L_{1} \coloneqq K_{Y_{1}}^{-1}$
	such that $\omega_{1} \coloneqq - \sqrt{-1} \Theta_{h_{1}}$ defines a \Kahler metric on $Y_{1}$
	with negative constant holomorphic sectional curvature.
	Let $h_{2}$ denote a Hermitian metric on $L_{2} \coloneqq \calO(-3)$ over $Y_{2} \coloneqq \cps^{n-2}$
	such that $\omega_{2} \coloneqq - \sqrt{-1} \Theta_{h_{2}}$
	is a positive constant multiple of the Fubini-Study form.
	Consider the Hermitian holomorphic line bundle
	$(L \coloneqq L_{1} \boxtimes L_{2}, h \coloneqq h_{1} \boxtimes h_{2})$ over $Y \coloneqq Y_{1} \times Y_{2}$.
	Then $\omega \coloneqq - \sqrt{-1} \Theta_{h} = \omega_{1} + \omega_{2}$ defines
	a Bochner-flat \Kahler metric on $Y$~\cite{Tachibana-Liu1970}*{Section 2}.
	Let $(S, T^{1,0}S, \eta)$ be the circle bundle associated with $(Y, L, h)$.
	Since $\omega$ is Bochner-flat,
	$(S, T^{1,0}S)$ is a spherical CR manifold.
	It suffices to show $c_{1}(T^{1,0}S)^{2} \neq 0$ in $H^{4}(S, \bbR)$
	by \cref{thm:Chern-classes-of-spherical-CR-structure}.
	The first Chern classes of $L$ and $T^{1,0}Y$ are given by
	\begin{gather}
		c_{1}(L)
		= c_{1}(L_{1}) + c_{1}(L_{2}), \\
		c_{1}(T^{1,0}Y)
		= c_{1}(T^{1,0}Y_{1}) + c_{1}(T^{1,0}Y_{2})
		= c_{1}(L_{1}) - \frac{n-1}{3} c_{1}(L_{2}).
	\end{gather}
	Consider the Gysin exact sequence
	\begin{equation}
		H^{2}(Y, \bbR) \xrightarrow{c_{1}(L)}
		H^{4}(Y, \bbR) \xrightarrow{p^{\ast}}
		H^{4}(S, \bbR).
	\end{equation}
	Since $p^{\ast} T^{1,0}Y \cong T^{1,0}S$,
	it is sufficient to prove
	\begin{equation}
	\label{eq:non-triviality-of-second-Chern-class}
		c_{1}(T^{1,0}Y)^{2}
		\notin \Im (H^{2}(Y, \bbR) \xrightarrow{c_{1}(L)} H^{4}(Y, \bbR)).
	\end{equation}
	On the one hand,
	\begin{gather}
		H^{2}(Y, \bbR)
		= \bbR c_{1}(L_{1})
			\oplus \bbR c_{1}(L_{2}), \\
		H^{4}(Y, \bbR)
		= \bbR c_{1}(L_{1})^{2}
			\oplus \bbR c_{1}(L_{1}) c_{1}(L_{2})
			\oplus \bbR c_{1}(L_{2})^{2},
	\end{gather}
	where we use the fact that $n-2 \geq 2$ in the latter equality.
	These imply
	\begin{equation}
		H^{4}(Y, \bbR)
		= \Im (H^{2}(Y, \bbR) \xrightarrow{c_{1}(L)} H^{4}(Y, \bbR))
			\oplus \bbR c_{1}(L_{2})^{2}.
	\end{equation}
	On the other hand,
	\begin{align}
		c_{1}(T^{1, 0} Y)^{2}
		&= \pqty{ c_{1}(L_{1}) - \frac{n-1}{3} c_{1}(L_{2}) }^{2} \\
		&= \pqty{ c_{1}(L) - \frac{n+2}{3} c_{1}(L_{2}) }^{2} \\
		&= c_{1}(L) \pqty{ c_{1}(L) - \frac{2(n+2)}{3}c_{1}(L_{2}) }
			+ \frac{(n+2)^{2}}{9} c_{1}(L_{2})^{2}.
	\end{align}
	Thus we have \cref{eq:non-triviality-of-second-Chern-class}.
\end{proof}

We next show that \cref{thm:Chern-classes-of-spherical-CR-structure} does not hold
for Chern classes with integer coefficients in general.
Note that the following example is the same as that in the proof of~\cite{Takeuchi2020}*{Proposition 4.1}.

\begin{proof}[Proof of \cref{prop:counterexample-to-integer-coefficients}]
	Fix a positive integer $d$.
	Let $h$ be a Hermitian metric on the holomorphic line bundle $\calO(-d)$ over $\cps^{n}$
	such that $\omega \coloneqq - \sqrt{-1} \Theta_{h}$
	is a positive constant multiple of the Fubini-Study form.
	Denote by $(S, T^{1,0}S, \eta)$ the circle bundle associated with $(\cps^{n}, \calO(-d), h)$.
	Since the Fubini-Study metric is Bochner-flat,
	$(S, T^{1,0}S)$ is a spherical CR manifold.
	Let $p \colon S \to \cps^{n}$ be the canonical projection.
	To simplify notation,
	we write $\tau$ for $c_{1}(\calO(1))$,
	which is a generator of $H^{2}(\cps^{n}, \bbZ) \cong \bbZ$.
	On the one hand,
	consider the Gysin exact sequence
	\begin{equation}
		H^{2}(\cps^{n}, \bbZ) \xrightarrow{-d \cdot \tau}
		H^{4}(\cps^{n}, \bbZ) \xrightarrow{p^{\ast}}
		H^{4}(S, \bbZ) \xrightarrow{}
		H^{3}(\cps^{n}, \bbZ) = 0.
	\end{equation}
	Since $H^{4}(\cps^{n}, \bbZ)$ is freely generated by $\tau^{2}$,
	the above exact sequence implies that $H^{4}(S, \bbZ) \cong \bbZ / d \bbZ$
	and $p^{\ast} \tau^{2}$ is a generator of $H^{4}(S, \bbZ)$.
	On the other hand,
	since $p^{\ast} T^{1,0}\cps^{n}$ is isomorphic to $T^{1,0}S$ as a complex vector bundle,
	\begin{equation}
		c_{1}(T^{1,0}S)
		= (n+1) p^{\ast} \tau,
		\qquad
		c_{2}(T^{1, 0} S)
		= \frac{n(n+1)}{2} p^{\ast} \tau^{2}.
	\end{equation}
	Hence
	\begin{align}
		2(n+2) c_{2}(T^{1,0}S) - (n+1) c_{1}(T^{1,0}S)^{2}
		&= n(n+1)(n+2) p^{\ast} \tau^{2} - (n+1)^{3} p^{\ast} \tau^{2} \\
		&= -(n+1) p^{\ast} \tau^{2}.
	\end{align}
	Therefore if we choose $d$ as a prime integer greater than $n+1$,
	then we have
	\begin{equation}
		 2(n+2) c_{2}(T^{1,0}S) 
		 \neq (n+1) c_{1}(T^{1,0}S)^{2}
	\end{equation}
	in $H^{4}(S, \bbZ)$.
\end{proof}

Now we consider the problem whether a given contact manifold admits a spherical CR structure.
We can explicitly construct a Stein fillable contact manifold without spherical CR structures.

\begin{proposition}
\label{prop:Reinhardt-boundary-without-spherical}
	Let $M$ be the boundary of the Stein domain
	\begin{equation}
		\Omega
		\coloneqq \Set{ z = (z^{0}, \dots, z^{n}) \in \bbC^{n+1} | \sum_{i = 0}^{n} (\log \abs{z^{i}})^{2} < 1 }
	\end{equation}
	in $\bbC^{n+1}$.
	Then $(M, \Re T^{1,0}M)$ is Stein fillable but admits no spherical CR structures.
\end{proposition}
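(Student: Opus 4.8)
The plan is to prove the two assertions by completely different mechanisms: Stein fillability by exhibiting an explicit strictly plurisubharmonic defining function, and the non-existence of spherical CR structures by a holonomy argument. For fillability I would pass to the logarithmic frame $z^{j}\partial_{z^{j}}$, which trivialises $T^{1,0}(\bbC^{\times})^{n+1}$, and keep the given defining function $\rho = \sum_{j}(\log\abs{z^{j}})^{2}-1$. A direct computation gives $\sqrt{-1}\,\del\delb\rho = \frac{\sqrt{-1}}{2}\sum_{j}\abs{z^{j}}^{-2}\,dz^{j}\wedge d\overline{z^{j}}$, a positive $(1,1)$-form, so $\rho$ is strictly plurisubharmonic on all of $(\bbC^{\times})^{n+1}$. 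Since $e^{-1}\le\abs{z^{j}}\le e$ on $\overline{\Omega}$, the closure $\overline{\Omega}$ is compact and contained in $(\bbC^{\times})^{n+1}$; thus $\rho$ is strictly plurisubharmonic on a neighbourhood of $\overline{\Omega}$, so $\Omega$ is a Stein domain and $(M,\Re T^{1,0}M)$ is Stein fillable.

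The same coordinates identify $\overline{\Omega}$ with $\overline{B}^{n+1}\times T^{n+1}$, hence $M$ with $S^{n}\times T^{n+1}$, so $\pi_{1}(M)\cong\bbZ^{n+1}$ and, for $n\ge 2$, the universal cover is $\widetilde{M}\cong S^{n}\times\bbR^{n+1}\simeq S^{n}$. I would next point out that \cref{thm:topological-obstruction-to-spherical-CR-structure} gives nothing here: the complex normal bundle of $M$ is trivialised by $\del\rho$ and $T^{1,0}(\bbC^{\times})^{n+1}$ is trivial, so $T^{1,0}M\oplus\underline{\bbC}\cong\underline{\bbC}^{n+1}$ and therefore $c(\xi)=c(T^{1,0}M)=1$; the required relation reduces to $0=0$. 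A genuinely topological argument is thus needed.

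For the non-existence I would argue through the developing pair of a putative spherical CR structure, viewed as a flat $(PU(n+1,1),S^{2n+1})$-structure with developing map $\mathrm{dev}\colon\widetilde{M}\to S^{2n+1}$ and holonomy $\varrho\colon\pi_{1}(M)=\bbZ^{n+1}\to PU(n+1,1)$. The image $\varrho(\pi_{1})$ is abelian, hence elementary: commuting isometries of the negatively curved space $\bbC H^{n+1}$ share their fixed points on $\overline{\bbC H^{n+1}}$, so the limit set $\Lambda$ of $\varrho(\pi_{1})$ is empty, a single point, or a pair of points. Granting the expected uniformisation for closed spherical CR manifolds with elementary holonomy—that $\varrho(\pi_{1})$ is discrete and $\mathrm{dev}$ is a diffeomorphism onto the domain of discontinuity $S^{2n+1}\setminus\Lambda$—it would follow that $\widetilde{M}$ is diffeomorphic to $S^{2n+1}$, to $S^{2n+1}\setminus\{pt\}\cong\bbR^{2n+1}$, or to $S^{2n+1}\setminus\{2\ \mathrm{pts}\}\simeq S^{2n}\times\bbR$, whose homotopy types are $S^{2n+1}$, a point, and $S^{2n}$. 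For $n\ge 2$ none of these agrees with the homotopy type $S^{n}$ of $\widetilde{M}$, a contradiction; hence $M$ admits no spherical CR structure, and in particular $\xi$ admits none.

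The main obstacle is exactly this uniformisation input. A priori the developing map of a closed spherical CR manifold need not be a covering onto $S^{2n+1}\setminus\Lambda$, so the crux is to justify—in the elementary (here abelian) case—that $\varrho(\pi_{1})$ is discrete and that $\mathrm{dev}$ is injective with the expected image, before the model comparison can be invoked. I would therefore concentrate either on adapting a known completeness theorem for elementary holonomy or on establishing injectivity of $\mathrm{dev}$ directly from the product structure $\widetilde{M}\cong S^{n}\times\bbR^{n+1}$; once completeness is secured, the homotopy-type comparison producing the contradiction is routine.
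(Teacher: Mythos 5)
Your Stein-fillability computation is fine (the paper takes this part for granted, since the proposition already declares $\Omega$ strictly pseudoconvex), and your observation that the Chern-class obstruction of \cref{thm:topological-obstruction-to-spherical-CR-structure} is vacuous here agrees with the remark the paper makes right after this proposition. But the non-existence argument has a genuine gap, and it is exactly the one you flag yourself: you need to know that a closed spherical CR manifold with abelian holonomy is uniformised, i.e.\ that the holonomy group is discrete and the developing map is a covering onto the domain of discontinuity. This is not a routine completeness statement one can ``grant''; a priori the developing map of a closed $(G,X)$-manifold need be neither injective nor surjective onto $S^{2n+1}\setminus\Lambda$, and establishing this for nilpotent or abelian holonomy is the substance of Goldman's work. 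The paper closes precisely this gap by citing it as a black box: by \cite{Goldman1983}*{Theorem 4.1}, a closed spherical CR manifold with nilpotent fundamental group is finitely covered by $S^{2n+1}$, by $S^{1}\times S^{2n}$, or by a Heisenberg nilmanifold $\bbH^{2n+1}/\Gamma$; the first two are excluded by comparing universal covers with $S^{n}\times\bbR^{n+1}$, and the third because a cocompact lattice $\Gamma\subset\bbH^{2n+1}$ is non-abelian while $\pi_{1}(M)\cong\bbZ^{n+1}$ is abelian. Your homotopy-type comparison is essentially the same final step applied to the (unjustified) uniformised models, so the argument as written is circular around the hard input rather than a proof.

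Two smaller points. First, even granting uniformisation, your homotopy comparison only works for $n\geq 2$: for $n=1$ the universal cover $\bbR^{3}$ is contractible and is not distinguished from the one-point-limit-set case, whereas the paper's route through the non-abelianness of $\Gamma$ handles $n=1$ as well (the proposition is stated without restriction on $n$). Second, in the one-point case the complement $S^{2n+1}\setminus\{pt\}$ carries the Heisenberg group structure, so the relevant quotients are Heisenberg nilmanifolds with non-abelian fundamental group; this algebraic obstruction is sharper than, and independent of, the homotopy-type count, and it is worth using since it is what survives in low dimensions. If you want to salvage your approach, the thing to prove (or locate in the literature) is the completeness theorem for closed similarity-type or nilpotent-holonomy geometric structures in the spherical CR setting; absent that, you should simply invoke Goldman's classification as the paper does.
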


\begin{proof}
	On the one hand,
	$M$ is diffeomorphic to $S^{n} \times T^{n+1}$.
	Hence its fundamental group $\pi_{1}(M)$ is given by
	\begin{equation}
		\pi_{1}(M)
		\cong
		\begin{cases}
			\bbZ^{3} & n = 1, \\
			\bbZ^{n+1} & n \geq 2.
		\end{cases}
	\end{equation}
	In particular,
	$\pi_{1}(M)$ is Abelian.
	On the other hand,
	there is a classification of spherical CR manifolds
	with nilpotent fundamental group by Goldman~\cite{Goldman1983}*{Theorem 4.1};
	$M$ must be finitely covered by $S^{2n+1}$, $S^{1} \times S^{2n}$, or $\bbH^{2n+1} / \Gamma$.
	Here $\bbH^{2n+1}$ is the Heisenberg group of dimension $2n+1$
	and $\Gamma \subset \bbH^{2n+1}$ is a discrete cocompact subgroup.
	Considering the universal cover of $M$ yields that
	the first and second cases do not occur.
	Hence $M$ must be finitely covered by $\bbH^{2n+1} / \Gamma$.
	However,
	the fundamental group of $\bbH^{2n+1} / \Gamma$ is $\Gamma$,
	which is non-Abelian;
	see~\cite{Folland2004}*{Section 2} for example.
	This contradicts the fact that $\pi_{1}(M)$ is Abelian.
	Therefore $M$ has no spherical CR structures.
\end{proof}

However,
the total Chern class $c(T^{1,0}M)$ is trivial for the above example.
Our last example is a Stein fillable contact manifold violating the equality \cref{eq:second-Chern-contact-with-spherical}.
Note that the following example is the same as that in the proof of~\cite{Takeuchi2020}*{Proposition 4.2}.

\begin{proof}[Proof of \cref{prop:Stein-fillable-with-no-spherical-CR}]
	Let $\Omega_{0}$ be a Stein domain in a two-dimensional complex manifold $X_{0}$
	such that its boundary $M_{0} \coloneqq \partial \Omega_{0}$
	satisfies $c_{1}(T^{1,0}M_{0}) \neq 0$ in $H^{2}(M_{0}, \mathbb{R})$;
	see~\cite{Etnyre-Ozbagci2008}*{Theorem 6.2} for an example of such $\Omega_{0}$.
	Take a defining function $\rho$ of $\Omega_{0}$
	that is strictly plurisubharmonic near the closure of $\Omega_{0}$.
	Without loss of generality,
	we may assume that $\rho$ is an exhaustion function on $X_{0}$.
	Then,
	for sufficiently small $\epsilon$,
	there exists a diffeomorphism
	$\chi \colon (- \epsilon, \epsilon) \times M_{0}
	\to \rho^{- 1}((- \epsilon, \epsilon))$
	satisfying $\chi(0, p) = p$ and $\rho \circ \chi (t, p) = t$.
	The function $\psi_{0} \coloneqq -1 / \rho$
	gives a strictly plurisubharmonic exhaustion function on $\Omega_{0}$.

	We first show the statement for the case of odd $n$;
	write $n = 2m-1$, $m \geq 2$.
	Consider the domain
	\begin{equation}
		\Omega
		\coloneqq \Set{ (p_{1}, \dots , p_{m}) \in (\Omega_{0})^{m}
		| \psi_{0}(p_{1}) + \dots + \psi_{0}(p_{m}) < 2m / \epsilon }.
	\end{equation}
	The function $\psi(p_{1}, \dots , p_{m}) \coloneqq \psi_{0}(p_{1}) + \dots + \psi_{0}(p_{m})$
	is a strictly plurisubharmonic exhaustion function on $(\Omega_{0})^{m}$,
	and $d \psi \neq 0$ on $M = \partial \Omega$.
	Hence $\Omega$ is a Stein domain
	in $(\Omega_{0})^{m} \subset (X_{0})^{m}$.
	Consider the map $\iota \colon (M_{0})^{m} \to M (\subset (X_{0})^{m})$
	defined by
	\begin{equation}
		\iota(p_{1}, \dots , p_{m})
		\coloneqq (\chi(- \epsilon / 2, p_{1}), \dots , \chi(- \epsilon / 2, p_{m})).
	\end{equation}
	Since this map is homotopic to the natural embedding $(M_{0})^{m} \hookrightarrow (X_{0})^{m}$,
	\begin{equation}
		\iota^{\ast} c(T^{1,0}M)
		= c(\iota^{\ast} T^{1,0}(X_{0})^{m})
		= c(T^{1,0}(X_{0})^{m} |_{(M_{0})^{m}})
		= c((T^{1,0}M_{0})^{m}).
	\end{equation}
	For $1 \leq i \leq m$,
	let $p_{i} \colon (M_{0})^{m} \to M_{0}$ be the $i$-th projection,
	and set
	\begin{equation}
		\tau_{i}
		\coloneqq p_{i}^{\ast} c_{1}(T^{1,0}M_{0}) \in H^{2}((M_{0})^{m}, \bbR).
	\end{equation}
	Then we have
	\begin{equation}
		\iota^{\ast} c(T^{1,0}M)
		= c \pqty{ \bigoplus_{i = 1}^{m} p_{i}^{\ast} T^{1,0}M_{0}}
		= \prod_{i = 1}^{m} p_{i}^{\ast} c(T^{1,0}M_{0})
		= \prod_{i = 1}^{m} (1 + \tau_{i}).
	\end{equation}
	In particular,
	\begin{equation}
		\iota^{\ast} c_{1}(T^{1,0}M)
		= \sum_{i = 1}^{m} \tau_{i},
		\qquad
		\iota^{\ast} c_{2}(T^{1,0}M)
		= \sum_{i < j} \tau_{i} \tau_{j}
	\end{equation}
	From the fact that $c_{1}(T^{1,0}M_{0}) \neq 0$ in $H^{2}(M_{0}, \bbR)$,
	it follows that $\iota^{*} c_{2}(T^{1,0}M) \neq 0$ in $H^{4}((M_{0})^{m}, \bbR)$.
	Moreover,
	by using $\tau_{i}^{2} = 0$,
	we have
	\begin{equation}
		\iota^{\ast} c_{2}(T^{1,0}M)
		= \frac{1}{2} \iota^{\ast} c_{1}(T^{1,0}M)^{2}
		\neq \frac{n+1}{2(n+2)} \iota^{\ast} c_{1}(T^{1,0}M)^{2}.
	\end{equation}
	This implies that
	the contact structure $\xi = \Re T^{1,0}M$ satisfies
	\begin{equation}
		c_{2}(\xi)
		= c_{2}(T^{1,0}M)
		\neq \frac{n+1}{2(n+2)} c_{1}(T^{1,0}M)^{2}
		= \frac{n + 1}{2(n+2)} c_{1}(\xi)^{2}
	\end{equation}
	in $H^{4}(M, \bbR)$.

	We next treat the case of even $n$;
	write $n = 2m$, $m \geq 2$.
	Consider the domain
	\begin{equation}
		\Omega
		\coloneqq \Set{(p_{1}, \dots , p_{m}, z) \in (\Omega_{0})^{m} \times \mathbb{C}
		| \psi_{0}(p_{1}) + \dots + \psi_{0}(p_{m}) + |z|^{2} < 2m / \epsilon}.
	\end{equation}
	This $\Omega$ is a Stein domain in $(\Omega_{0})^{m} \times \mathbb{C} \subset (X_{0})^{m} \times \mathbb{C}$.
	Consider the map $\iota \colon (M_{0})^{m} \to M = \partial \Omega$ given by
	\begin{equation}
		\iota(p_{1}, \dots , p_{m})
		\coloneqq (\chi(- \epsilon / 2, p_{1}), \dots , \chi(- \epsilon / 2, p_{m}), 0).
	\end{equation}
	Then we obtain
	\begin{equation}
		\iota^{\ast} c(T^{1,0}M)
		= c((T^{1,0}M_{0})^{m}).
	\end{equation}
	Similar to the case of odd $n$,
	we have
	\begin{equation}
		c_{2}(\xi)
		\neq \frac{n+1}{2(n+2)} c_{1}(\xi)^{2}
	\end{equation}
	in $H^{4}(M, \bbR)$.
	This proves the statement.
\end{proof}

\section*{Acknowledgements}
The author is grateful to Taiji Marugame and Yoshihiko Matsumoto for helpful comments.

\bibliography{my-reference,my-reference-preprint}

\end{document}